\newtheorem{theorem}{Theorem}[section]
\newtheorem{lemma}{Lemma}[section]
\newtheorem{conjecture}{Conjecture}[section]
\theoremstyle{definition}
\newtheorem{definition}{Definition}[section]
\newtheorem{remark}{Remark}[section]
\numberwithin{equation}{section}
\begin{document}

\title{Deep transfers of \(p\)-class tower groups}

\author{Daniel C. Mayer}
\address{Naglergasse 53\\8010 Graz\\Austria}
\email{algebraic.number.theory@algebra.at}
\urladdr{http://www.algebra.at}
\thanks{Research supported by the Austrian Science Fund (FWF): P 26008-N25}
\subjclass[2000]{Primary 11R37, 11R29, 11R11, 11R20, 11Y40; Secondary 20D15, 20E18, 20E22, 20F05, 20F12, 20F14, 20--04}
\keywords{Hilbert \(p\)-class field towers, \(p\)-class groups, \(p\)-principalization,
quadratic fields, dihedral fields of degree \(2p\);
finite \(p\)-groups, two-step centralizers, polarization principle, descendant trees,
\(p\)-group generation algorithm, \(p\)-multiplicator rank, relation rank, generator rank,
deep transfers, shallow transfers, partial order and monotony principle of Artin patterns,
parametrized polycyclic pc-presentations, commutator calculus}

\date{June 30, 2017}

\begin{abstract}
Let \(p\) be a prime.
For any finite \(p\)-group \(G\),
the deep transfers \(T_{H,G^\prime}:\,H/H^\prime\to G^\prime/G^{\prime\prime}\)
from the maximal subgroups \(H\) of index \((G:H)=p\) in \(G\) to the derived subgroup \(G^\prime\)
are introduced as an innovative tool for identifying \(G\) uniquely
by means of the family of kernels \(\varkappa_d(G)=(\ker(T_{H,G^\prime}))_{(G:H)=p}\).
For all finite \(3\)-groups \(G\) of coclass \(\mathrm{cc}(G)=1\),
the family \(\varkappa_d(G)\) is determined explicitly.
The results are applied to the Galois groups \(G=\mathrm{Gal}(F_3^{(\infty)}/F)\)
of the Hilbert \(3\)-class towers of all real quadratic fields \(F=\mathbb{Q}(\sqrt{d})\)
with fundamental discriminants \(d>1\), \(3\)-class group \(\mathrm{Cl}_3(F)\simeq C_3\times C_3\),
and total \(3\)-principalization in each of their four unramified cyclic cubic extensions \(E/F\).
A systematic statistical evaluation is given for the complete range \(1<d<5\cdot 10^6\),
and a few exceptional cases are pointed out for \(1<d<64\cdot 10^6\).
\end{abstract}

\maketitle



\section{Introduction}
\label{s:Intro}
\noindent
The layout of this paper is the following.
Deep transfers of finite \(p\)-groups \(G\), with an assigned prime number \(p\),
are introduced as an innovative supplement to the (usual) shallow transfers
\cite{Ma2}
in \S\
\ref{s:ShallowDeep}.
The family \(\varkappa_d(G)=(\ker(T_{H,G^\prime}))_{(G:H)=p}\)
of the kernels of all deep transfers of \(G\)
is called the \textit{deep transfer kernel type} of \(G\)
and will play a crucial role in this paper.
For all finite \(3\)-groups \(G\) of coclass \(\mathrm{cc}(G)=1\),
the deep transfer kernel type \(\varkappa_d(G)=(\ker(T_{H_i,G^\prime}))_{1\le i\le 4}\)
is determined explicitly with the aid of commutator calculus in \S\
\ref{s:3GroupsDeep}
using a parametrized polycyclic power-commutator presentation of \(G\)
\cite{Bl2,Mi,Ne}.
In the concluding \S\
\ref{s:3ClassTowers},
the orders of the deep transfer kernels are sufficient for
identifying the Galois group \(G_3^\infty{F}:=\mathrm{Gal}(F_3^{(\infty)}/F)\)
of the maximal unramified pro-\(3\) extension
of real quadratic fields \(F=\mathbb{Q}(\sqrt{d})\)
with \(3\)-class group \(\mathrm{Cl}_3(F)\simeq C_3\times C_3\),
and total \(3\)-principalization in each of their four unramified cyclic cubic extensions \(E_1,\ldots,E_4\).



\section{Shallow and deep transfer of \(p\)-groups}
\label{s:ShallowDeep}
\noindent
With an assigned prime number \(p\ge 2\),
let \(G\) be a finite \(p\)-group.
Since our focus in this paper will be on the simplest possible non-trivial situation,
we assume that the abelianization \(G/G^\prime\) of \(G\) is of elementary type \((p,p)\) with rank two.
For applications in number theory, concerning \(p\)-class towers,
the Artin pattern has proved to be a decisive collection of information on \(G\).

\begin{definition}
\label{dfn:Shallow}
The \textit{Artin pattern} \(\mathrm{AP}(G):=(\tau(G),\varkappa(G))\) of \(G\) consists of two families
\begin{equation}
\label{eqn:ArtinPattern}
\tau(G):=(H_i/H_i^\prime)_{1\le i\le p+1} \text{ and } \varkappa(G):=(\ker(T_{G,H_i}))_{1\le i\le p+1}
\end{equation}
containing the targets and kernels
of the Artin transfer homomorphisms \(T_{G,H_i}:\,G/G^\prime\to H_i/H_i^\prime\)
\cite{Ma9}
from \(G\) to its \(p+1\) maximal subgroups \(H_i\) with \(i\in\lbrace 1,\ldots,p+1\rbrace\).
Since the maximal subgroups form the shallow layer \(\mathrm{Lyr}_1(G)\)
of subgroups of index \((G:H_i)=p\) of \(G\),
we shall call the \(T_{G,H_i}\) the \textit{shallow transfers} of \(G\),
and \(\varkappa_s(G):=\varkappa(G)\) the \textit{shallow transfer kernel type} (sTKT) of \(G\).
\end{definition}

\noindent
We recall
\cite{Ma2}
that the sTKT is usually simplified
by a family of non-negative integers, in the following way.
For \(1\le i\le p+1\),
\begin{equation}
\label{eqn:ShallowTKT}
\varkappa_s(G)_i:=
\begin{cases}
j & \text{ if } \ker(T_{G,H_i})=H_j/G^\prime \text{ for some } j\in\lbrace 1,\ldots,p+1\rbrace, \\
0 & \text{ if } \ker(T_{G,H_i})=G/G^\prime.
\end{cases}
\end{equation}

\noindent
The progressive innovation in this paper, however,
is the introduction of the deep Artin transfer.

\begin{definition}
\label{dfn:Deep}
By the \textit{deep transfers}
we understand the Artin transfer homomorphisms \(T_{H_i,G^\prime}:\,H_i/H_i^\prime\to G^\prime/G^{\prime\prime}\)
\cite{Ma9}
from the maximal subgroups \(H_1,\ldots,H_{p+1}\) to the commutator subgroup \(G^\prime\) of \(G\),
which forms the deep layer \(\mathrm{Lyr}_2(G)\)
of the (unique) subgroup of index \((G:G^\prime)=p^2\) of \(G\) with abelian quotient \(G/G^\prime\).
Accordingly, we call the family
\begin{equation}
\label{eqn:DeepTKT}
\varkappa_d(G)=(\#\ker(T_{H_i,G^\prime}))_{1\le i\le p+1}
\end{equation}
the \textit{deep transfer kernel type} (dTKT) of \(G\).
\end{definition}

\noindent
We point out that, as opposed to the sTKT, the members of the dTKT are only \textit{cardinalities},
since this will suffice for reaching our intended goals in this paper.
This preliminary coarse definition is open to further refinement in subsequent publications.
(See the proof of Theorem
\ref{thm:Main}.)



\section{Identification of \(3\)-groups by deep transfers}
\label{s:3GroupsDeep}
\noindent
The drawback of the sTKT is the fact that occasionally
several non-isomorphic \(p\)-groups \(G\) share a common Artin pattern \(\mathrm{AP}(G):=(\tau(G),\varkappa_s(G))\)
\cite[Thm. 7.2, p. 158]{Ma11b}.
The benefit of the dTKT is its ability
to distinguish the members of such batches of \(p\)-groups
which have been inseparable up to now.
After the general introduction of the dTKT for arbitrary \(p\)-groups in \S\
\ref{s:ShallowDeep},
we are now going to demonstrate its advantages
in the particular situation of the prime \(p=3\) and finite \(3\)-groups \(G\) of coclass \(\mathrm{cc}(G)=1\),
which are necessarily metabelian with second derived subgroup \(G^{\prime\prime}=1\)
and abelianization \(G/G^\prime\simeq C_3\times C_3\),
according to Blackburn
\cite{Bl1}.

For the statement of our main theorem,
we need a precise ordering of the four maximal subgroups \(H_1,\ldots,H_4\) of the group \(G=\langle x,y\rangle\),
which can be generated by two elements \(x,y\),
according to the Burnside basis theorem.
For this purpose, we select the generators \(x,y\) such that
\begin{equation}
\label{eqn:MaximalSubgroups}
H_1=\langle y,G^\prime\rangle,\quad
H_2=\langle x,G^\prime\rangle,\quad
H_3=\langle xy,G^\prime\rangle,\quad
H_4=\langle xy^2,G^\prime\rangle,
\end{equation}
and \(H_1=\chi_2(G)\),
provided that \(G\) is of nilpotency class \(\mathrm{cl}(G)\ge 3\).
Here we denote by
\begin{equation}
\label{eqn:TwoStepCentralizer}
\chi_2(G):=\lbrace g\in G\mid (\forall\ h\in G^\prime)\ \lbrack g,h\rbrack\in\gamma_4(G)\rbrace
\end{equation}
the \textit{two-step centralizer} of \(G^\prime\) in \(G\),
where we let \((\gamma_i(G))_{i\ge 1}\) be the lower central series of \(G=:\gamma_1(G)\)
with \(\gamma_i(G)=\lbrack\gamma_{i-1}(G),G\rbrack\) for \(i\ge 2\), in particular, \(\gamma_2(G)=G^\prime\).

The identification of the groups will be achieved with the aid of
parametrized polycyclic power-commutator presentations, as given by
Blackburn
\cite{Bl2},
Miech
\cite{Mi},
and Nebelung
\cite{Ne}:
\begin{equation}
\label{eqn:Presentation}
\begin{aligned}
G_a^n(z,w) := \langle x,y,s_2,\ldots,s_{n-1}\mid s_2=\lbrack y,x\rbrack,\ (\forall_{i=3}^n)\ s_i=\lbrack s_{i-1},x\rbrack,\ s_n=1,\ \lbrack y,s_2\rbrack=s_{n-1}^a, \\
(\forall_{i=3}^{n-1})\ \lbrack y,s_i\rbrack=1,\ x^3=s_{n-1}^w,\ y^3s_2^3s_3=s_{n-1}^z,\ (\forall_{i=2}^{n-3})\ s_i^3s_{i+1}^3s_{i+2}=1,\ s_{n-2}^3=s_{n-1}^3=1\ \rangle,
\end{aligned}
\end{equation}
where \(a\in\lbrace 0,1\rbrace\) and \(w,z\in\lbrace -1,0,1\rbrace\) are bounded parameters,
and the \textit{index of nilpotency} \(n=\mathrm{cl}(G)+1=\mathrm{cl}(G)+\mathrm{cc}(G)=\log_3(\mathrm{ord}(G))=:\mathrm{lo}(G)\) is an unbounded parameter.


\begin{lemma}
\label{lem:Powers}
Let \(G\) be an arbitrary group with elements \(x,y\in G\).
Then the second and third power of the product \(xy\) are given by
\begin{enumerate}
\item
\((xy)^2=x^2y^2s_2t_3\), where \(s_2:=\lbrack y,x\rbrack\), \(t_3:=\lbrack s_2,y\rbrack\),
\item
\((xy)^3=x^3y^3(s_2t_3^2t_4)^2s_3u_4^2u_5s_2t_3\), where \(s_3=\lbrack s_2,x\rbrack\), \(t_4=\lbrack t_3,y\rbrack\),
\(u_4=\lbrack s_3,y\rbrack\), \(u_5=\lbrack u_4,y\rbrack\).
\end{enumerate}
If \(G\simeq G_a^n(z,w)\), then \((xy)^2=x^2y^2s_2s_{n-1}^{-a}\) and \((xy)^3=x^3y^3s_2^3s_3s_{n-1}^{-2a}\),
and the second and third power of \(xy^2\) are given by
\((xy^2)^2=x^2y^4s_2^2s_{n-1}^{-2a}\) and \((xy^2)^3=x^3y^6s_2^6s_3^2s_{n-1}^{-2a}\).
\end{lemma}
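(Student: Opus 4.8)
The plan is to establish parts (1) and (2) by direct commutator manipulation in a free group, and then specialize to $G_a^n(z,w)$ by using the defining relations to evaluate the correction terms. For part (1), I would write $(xy)^2 = xyxy$ and insert $x^{-1}x$ to get $xyxy = x^2\,(x^{-1}yx)\,y = x^2\,(y\,[y,x])\,y$ with the convention $[y,x]=y^{-1}x^{-1}yx$, so $x^{-1}yx = y[y,x] = ys_2$; hence $(xy)^2 = x^2 y s_2 y$. Now I must move the trailing $y$ past $s_2$: $s_2 y = y\,(y^{-1}s_2 y) = y\,s_2\,[s_2,y] = y s_2 t_3$. Substituting gives $(xy)^2 = x^2 y\cdot y s_2 t_3 = x^2 y^2 s_2 t_3$, as claimed. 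This is elementary; the only care needed is to fix the commutator convention (left-normed, $[a,b]=a^{-1}b^{-1}ab$) and keep it consistent with the presentation \eqref{eqn:Presentation}.

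For part (2), the cleanest route is $(xy)^3 = (xy)^2\cdot(xy) = x^2 y^2 s_2 t_3\cdot xy$. I would then conjugate the factor $xy$ leftward past $s_2 t_3$, using that conjugation by $x$ and by $y$ acts on the $s$- and $t$-chains via the relations $s_{i+1}=[s_i,x]$, $t_4=[t_3,y]$, $u_4=[s_3,y]$, $u_5=[u_4,y]$, together with $[t_3,x]$-type terms that I expect to vanish or be absorbed. Concretely: push the $x$ from $xy$ to the left past $t_3$ and $s_2$, generating $[t_3,x]$ and $[s_2,x]=s_3$ (and iterated commutators thereof), then collect one more factor $y$ and repeat the maneuver of part (1) to peel off $y^2$ versus $y$. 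The bookkeeping must be organized so that, after moving everything into the normal form $x^3 y^3 (\text{chain terms})$, the chain part reads $(s_2 t_3^2 t_4)^2 s_3 u_4^2 u_5 s_2 t_3$. The main obstacle is precisely this part-(2) collection: there are several nested commutators ($t_3, t_4, u_4, u_5$) that interact, and one has to verify that no further terms (e.g. $[t_3,x]$, higher $u$'s, or commutators among $t$'s and $u$'s) survive at the relevant nilpotency level — this requires either working in the free nilpotent group of the appropriate class or invoking the standard collection identities carefully.

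For the specialization to $G\simeq G_a^n(z,w)$, I would substitute the presentation relations into the free-group identities. From \eqref{eqn:Presentation} we have $[y,s_2]=s_{n-1}^a$ and $[y,s_i]=1$ for $3\le i\le n-1$, so $t_3 = [s_2,y] = [y,s_2]^{-1}$ (up to the convention) $= s_{n-1}^{-a}$, while $t_4 = [t_3,y]$, $u_4=[s_3,y]$, $u_5=[u_4,y]$ all lie in $[G^\prime, \gamma_3(G)\cdot\ldots]$ and are killed because $[y,s_i]=1$ for $i\ge 3$ and $s_{n-1}$ is central of order $3$ (indeed $s_{n-1}^3=1$ and $[y,s_{n-1}]=1$). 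Hence in $G_a^n(z,w)$: $(xy)^2 = x^2 y^2 s_2 t_3 = x^2 y^2 s_2 s_{n-1}^{-a}$, and in part (2) the chain $(s_2 t_3^2 t_4)^2 s_3 u_4^2 u_5 s_2 t_3$ collapses — using $t_3 = s_{n-1}^{-a}$, $t_4=u_4=u_5=1$, and the centrality and exponent-$3$ property of $s_{n-1}$ — to $(s_2 s_{n-1}^{-2a})^2 s_3 s_2 s_{n-1}^{-a} = s_2^3 s_3 s_{n-1}^{-2a}$ after commuting the central powers together (since $s_{n-1}^{-4a-a}=s_{n-1}^{-5a}=s_{n-1}^{-2a}$ modulo $s_{n-1}^3=1$), giving $(xy)^3 = x^3 y^3 s_2^3 s_3 s_{n-1}^{-2a}$. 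Finally, the formulas for $(xy^2)^2$ and $(xy^2)^3$ follow by the same mechanism applied with $y$ replaced by $y^2$: one gets $(xy^2)^2 = x^2 y^4 s_2^2 [\,s_2,y^2\,]\cdots = x^2 y^4 s_2^2 s_{n-1}^{-2a}$ since $[s_2,y^2]=[s_2,y]^2\cdot(\text{higher})=s_{n-1}^{-2a}$, and similarly $(xy^2)^3 = x^3 y^6 s_2^6 s_3^2 s_{n-1}^{-2a}$ — here one uses $x^3 = s_{n-1}^w$ only implicitly and the relation $y^3 s_2^3 s_3 = s_{n-1}^z$ is \emph{not} needed, only the commutator structure. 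The routine but slightly delicate point in this last step is tracking the exponent of $s_{n-1}$ modulo $3$; I would present those counts explicitly but briefly.
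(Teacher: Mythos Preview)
Your treatment of part (1) and of the specialization to $G_a^n(z,w)$ is essentially the same as the paper's and is fine.

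The substantive discrepancy is in part (2). You compute $(xy)^3$ as $(xy)^2\cdot(xy)=x^2y^2s_2t_3\cdot xy$ and then propose to push the trailing $x$ leftward past $t_3$ and $s_2$. That step creates $[t_3,x]$, a commutator that is \emph{not} among $s_2,s_3,t_3,t_4,u_4,u_5$; you note this and say you ``expect'' such terms to vanish or be absorbed, or alternatively suggest working in a free nilpotent quotient. Neither escape is available: the lemma is asserted for an \emph{arbitrary} group, and in a free group $[t_3,x]$ is an independent word that does not cancel and cannot be rewritten in terms of the listed commutators. So your right-multiplication route does not lead to the stated identity; at best it produces a different (equivalent) word involving extra commutators.

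The paper avoids this entirely by multiplying on the \emph{left}: $(xy)^3=xy\cdot(xy)^2=xy\cdot x^2y^2s_2t_3$. After the single swap $yx^2=x^2ys_2^2s_3$ (which uses only $s_2,s_3$), the expression is $x^3\,y\,s_2^2s_3\,y^2\,s_2t_3$, and from here every remaining move shuffles a commutator past a power of $y$ only, never past $x$. Using the precomputed identities $s_2y^2=y^2s_2t_3^2t_4$ and $s_3y^2=y^2s_3u_4^2u_5$ (each proved exactly as you did $s_2y=ys_2t_3$), one reaches $x^3y^3(s_2t_3^2t_4)^2s_3u_4^2u_5s_2t_3$ on the nose, with no extraneous terms. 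The asymmetry between left and right multiplication is the whole point here; once you switch to $xy\cdot(xy)^2$, the collection you describe becomes straightforward rather than ``slightly delicate''.
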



\begin{proof}
We prepare the calculation of the powers by proving a few preliminary identities:\\
\(yx=1\cdot yx=xyy^{-1}x^{-1}\cdot yx=xy\cdot y^{-1}x^{-1}yx=xy\cdot\lbrack y,x\rbrack=xys_2\), and similarly\\
\(s_2y=ys_2\cdot\lbrack s_2,y\rbrack=ys_2t_3\) and
\(t_3y=yt_3\cdot\lbrack t_3,y\rbrack=yt_3t_4\) and
\(s_2x=xs_2\cdot\lbrack s_2,x\rbrack=xs_2s_3\) and\\
\(s_3y=ys_3\cdot\lbrack s_3,y\rbrack=ys_3u_4\) and
\(u_4y=yu_4\cdot\lbrack u_4,y\rbrack=yu_4u_5\). Furthermore,\\
\(yx^2=yx\cdot x=xys_2\cdot x=xy\cdot s_2x=xy\cdot xs_2s_3=x\cdot yx\cdot s_2s_3=x\cdot xys_2\cdot s_2s_3=x^2ys_2^2s_3\),\\
\(s_2y^2=s_2y\cdot y=ys_2t_3\cdot y=ys_2\cdot t_3y=ys_2\cdot yt_3t_4=y\cdot s_2y\cdot t_3t_4=y\cdot ys_2t_3\cdot t_3t_4=y^2s_2t_3^2t_4\),\\
\(s_3y^2=s_3y\cdot y=ys_3u_4\cdot y=ys_3\cdot u_4y=ys_3\cdot yu_4u_5=y\cdot s_3y\cdot u_4u_5=y\cdot ys_3u_4\cdot u_4u_5=y^2s_3u_4^2u_5\).\\
Now the second power of \(xy\) is\\
\((xy)^2=xyxy=x\cdot yx\cdot y=x\cdot xys_2\cdot y=x^2y\cdot s_2y=x^2y\cdot ys_2t_3=x^2y^2s_2t_3\)\\
and the third power of \(xy\) is\\
\((xy)^3=xy\cdot (xy)^2=xy\cdot x^2y^2s_2t_3=x\cdot yx^2\cdot y^2s_2t_3=x\cdot x^2ys_2^2s_3\cdot y^2s_2t_3=x^3ys_2^2\cdot s_3y^2\cdot s_2t_3=\)\\
\(=x^3ys_2^2\cdot y^2s_3u_4^2u_5\cdot s_2t_3=x^3ys_2\cdot s_2y^2\cdot s_3u_4^2u_5s_2t_3=x^3ys_2\cdot y^2s_2t_3^2t_4\cdot s_3u_4^2u_5s_2t_3=\)\\
\(=x^3y\cdot s_2y^2\cdot s_2t_3^2t_4s_3u_4^2u_5s_2t_3=x^3y\cdot y^2s_2t_3^2t_4\cdot s_2t_3^2t_4s_3u_4^2u_5s_2t_3=x^3y^3(s_2t_3^2t_4)^2s_3u_4^2u_5s_2t_3\).\\
If \(G\simeq G_a^n(z,w)\), then \(t_4=u_4=u_5=1\), \(t_3=s_{n-1}^{-a}\), \(t_3^3=s_{n-1}^{-3a}=1\), and \(G^\prime\) is abelian.
\end{proof}


\begin{theorem}
\label{thm:Main}
\textbf{(\(3\)-groups \(G\) of coclass \(\mathrm{cc}(G)=1\).)}
Let \(G\) be a finite \(3\)-group of coclass \(\mathrm{cc}(G)=1\)
and order \(\mathrm{ord}(G)=3^n\) with an integer exponent \(n\ge 2\).
Then the shallow and deep transfer kernel type of \(G\) are given
in dependence on the relational parameters \(a,n,w,z\) of \(G\simeq G_a^n(z,w)\)
by Table
\ref{tbl:Main}.
\end{theorem}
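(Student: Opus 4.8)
The plan is to evaluate both kernel types directly from the parametrized presentation \eqref{eqn:Presentation}, exploiting that \emph{every} transfer occurring here has index $3$ in its ambient group and hence admits the elementary index-$p$ evaluation formula: for $V_{K,L}\colon K/K^\prime\to L/L^\prime$ with $L\trianglelefteq K$ and $(K:L)=3$, one has $V_{K,L}(gK^\prime)=g^3L^\prime$ when $g\in K\setminus L$, and $V_{K,L}(gK^\prime)=g\cdot{}^{t}g\cdot{}^{t^2}g\,L^\prime$ when $g\in L$, where $t\in K\setminus L$ is fixed and ${}^{t}g:=tgt^{-1}$. Since $G^{\prime\prime}=1$, the group $G^\prime=\gamma_2(G)=\langle s_2,\dots,s_{n-1}\rangle$ is abelian, which permits free reordering of the $s_i$ throughout; I apply the formula with $(K,L)=(G,H_i)$ for the shallow transfers and with $(K,L)=(H_i,G^\prime)$ for the deep transfers.

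For the shallow transfer kernel type, I evaluate $T_{G,H_i}$ on the two generators $xG^\prime,yG^\prime$ of $G/G^\prime\simeq C_3\times C_3$, with the ordering of the $H_i$ from \eqref{eqn:MaximalSubgroups}. Whichever of $x,y,xy,xy^2$ lies outside $H_i$ contributes its cube, and by Lemma \ref{lem:Powers} together with $x^3=s_{n-1}^w$ and $y^3=s_{n-1}^zs_3^{-1}s_2^{-3}$ (the latter from the relation $y^3s_2^3s_3=s_{n-1}^z$) these cubes collapse to $x^3=s_{n-1}^w$, $(xy)^3=s_{n-1}^{w+z-2a}$, $(xy^2)^3=s_{n-1}^{w+2z-2a}$, and $y^3=s_{n-1}^zs_3^{-1}s_2^{-3}$; the generator lying inside $H_i$ is evaluated via the commutator formula using $\lbrack y,x\rbrack=s_2$, $\lbrack s_j,x\rbrack=s_{j+1}$, $\lbrack y,s_2\rbrack=s_{n-1}^a$, and $\lbrack y,s_j\rbrack=1$ for $j\ge 3$. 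Reducing each resulting element of $H_i$ modulo $H_i^\prime$ shows that $\ker(T_{G,H_i})$ is trivial, or all of $G/G^\prime$, or exactly one of the quotients $H_j/G^\prime$, and collecting the four cases recovers the classically known values of $\varkappa_s(G)$ (Nebelung's types of kind $\mathrm{a}$) displayed in Table \ref{tbl:Main}.

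For the deep transfer kernel type I first determine the commutator subgroups $H_i^\prime$: from the relations one finds $H_1^\prime=\langle\lbrack y,G^\prime\rbrack\rangle=\langle s_{n-1}^a\rangle$, so that $(H_1:H_1^\prime)=3^{\,n-1-a}$ and $H_1=\chi_2(G)$ is abelian precisely for $a=0$, whereas $H_i^\prime=\gamma_3(G)$ and $(H_i:H_i^\prime)=9$ for $i\in\{2,3,4\}$ (when $\mathrm{cl}(G)\ge 2$). Next I evaluate $T_{H_i,G^\prime}$. On the cyclic quotient $H_i/G^\prime$ the transfer of a generator $g_i\in\{y,x,xy,xy^2\}$ is the cube $g_i^3$ already listed above. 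On $G^\prime$ itself the transfer is the norm $h\mapsto h\cdot{}^{t}h\cdot{}^{t^2}h$ relative to conjugation by a generator $t$ of $H_i/G^\prime$; the decisive point is that for $i\in\{2,3,4\}$ this norm annihilates $G^\prime$ entirely — because $t$ acts on the abelian group $G^\prime$ essentially by the index shift $s_j\mapsto s_js_{j+1}$, so that modulo the power relations $s_i^3s_{i+1}^3s_{i+2}=1$ one gets $s_j\cdot{}^{t}s_j\cdot{}^{t^2}s_j=s_j^3s_{j+1}^3s_{j+2}=1$ for every generator $s_j$ — whereas for $i=1$ the element $t=y$ fixes $G^\prime$ modulo the subgroup $\gamma_{n-1}(G)$ of exponent $3$, so the norm reduces there to the cubing map $h\mapsto h^3$. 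Hence $\mathrm{im}(T_{H_i,G^\prime})=\langle g_i^3\rangle$ for $i\in\{2,3,4\}$, which gives $\#\ker(T_{H_i,G^\prime})=9$ or $3$ according as the corresponding exponent $w$, $w+z-2a$, $w+2z-2a$ is, or is not, divisible by $3$; and $\mathrm{im}(T_{H_1,G^\prime})=\langle (G^\prime)^3,\,y^3\rangle=(G^\prime)^3=\gamma_4(G)$ (using $s_3^{-1}s_2^{-3}=(s_2^3s_3)^{-1}=s_4\in(G^\prime)^3$, and $(G^\prime)^3=\gamma_4(G)$ from the power relations), whence $\#\ker(T_{H_1,G^\prime})=3^{\,n-1-a}/3^{\,n-4}=3^{\,3-a}$, i.e.\ $27$ for $a=0$ and $9$ for $a=1$. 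Assembling the four cardinalities yields the dTKT column of Table \ref{tbl:Main}; notably the single entry $\#\ker(T_{H_1,G^\prime})$ already separates the two values of $a$, which $\varkappa_s(G)$ alone cannot do.

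The small indices $n\in\{2,3,4\}$ — where several $s_j$ collapse to $1$, where $\chi_2(G)$ is not yet distinguished (so the labelling of the $H_i$ is only fixed for $\mathrm{cl}(G)\ge 3$), and where distinct parameter vectors $(a,n,w,z)$ may present isomorphic groups — are handled as separate rows of Table \ref{tbl:Main} by direct inspection, together with the standard identifications among the $G_a^n(z,w)$ so that each isomorphism type is listed once. The main obstacle is the element-by-element commutator bookkeeping for $H_1=\chi_2(G)$: its abelianization $H_1/H_1^\prime$ is by far the largest of the four, so $T_{H_1,G^\prime}$ has the most delicate image, and one must carefully track the cubing map on $G^\prime$ and the contribution of $y^3=s_{n-1}^zs_3^{-1}s_2^{-3}$ against the power relations $s_i^3s_{i+1}^3s_{i+2}=1$ and their degenerations near the top of the lower central series. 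By contrast, for $i\in\{2,3,4\}$ the subgroups $H_i$ have abelianization $C_3\times C_3$, and once the cube formulas of Lemma \ref{lem:Powers} are in hand those three transfers are routine.
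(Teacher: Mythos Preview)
Your approach is the same as the paper's --- index-$3$ transfer formulas applied to the presentation \eqref{eqn:Presentation}, with Lemma~\ref{lem:Powers} supplying the cubes $(xy)^3$ and $(xy^2)^3$ --- and your treatment of $T_{H_i,G^\prime}$ for $i\in\{2,3,4\}$ is correct and matches the paper's computations \eqref{eqn:dTKT2}--\eqref{eqn:dTKT4}. However, there is a concrete slip in your handling of $i=1$ that produces wrong cardinalities.

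You write $s_3^{-1}s_2^{-3}=(s_2^3s_3)^{-1}=s_4$, but the power relation in \eqref{eqn:Presentation} is $s_2^3s_3^{\mathbf 3}s_4=1$, not $s_2^3s_3s_4=1$; so only $(s_2^3s_3^3)^{-1}=s_4$. Consequently $y^3=s_2^{-3}s_3^{-1}s_{n-1}^z\equiv s_3^{-1}\pmod{\gamma_4(G)}$ is \emph{not} in $(G^\prime)^3=\gamma_4(G)$, and the correct image is
\[
\mathrm{im}(T_{H_1,G^\prime})=\langle (G^\prime)^3,\,y^3\rangle=\langle \gamma_4(G),\,s_3\rangle=\gamma_3(G),
\]
of order $3^{\,n-3}$. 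Hence $\#\ker(T_{H_1,G^\prime})=3^{\,(n-1-a)-(n-3)}=3^{\,2-a}$, i.e.\ $9$ for $a=0$ and $3$ for $a=1$ when $n\ge 5$, in agreement with the paper's direct kernel description \eqref{eqn:dTKT1}. Your claimed values $27$ and $9$ contradict Table~\ref{tbl:Main}: for example $G_1^n(0,0)$ would acquire dTKT $(9,9,3,3)$ under your formula, whereas the table records $(3,9,3,3)$; and every $a=0$ row with $n\ge 5$ would start with $27$ instead of $9$. Once this single exponent is fixed, your argument coincides with the paper's.
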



\renewcommand{\arraystretch}{1.2}

\begin{table}[ht]
\caption{Shallow and deep TKT of \(3\)-groups \(G\) with \(\mathrm{cc}(G)=1\)}
\label{tbl:Main}
\begin{center}
\begin{tabular}{|l|c|l|c|c|}
\hline
 \(G\simeq\)     & \(n\)          & Type                  & \(\varkappa_s(G)\) & \(\varkappa_d(G)\) \\
\hline
 \(G_0^n(0,0)\)  & \(=2\)         & \(\mathrm{a}.1^\ast\) & \((0,0,0,0)\)      & \((3,3,3,3)\)      \\
 \(G_0^n(0,0)\)  & \(\ge 3\)      & \(\mathrm{a}.1^\ast\) & \((0,0,0,0)\)      & \((9,9,9,9)\)      \\
\hline
 \(G_1^n(0,0)\)  & \(\ge 5\)      & \(\mathrm{a}.1\)      & \((0,0,0,0)\)      & \((3,9,3,3)\)      \\
 \(G_1^n(0,-1)\) & \(\ge 5\)      & \(\mathrm{a}.1\)      & \((0,0,0,0)\)      & \((3,3,9,9)\)      \\
 \(G_1^n(0,1)\)  & \(\ge 5\)      & \(\mathrm{a}.1\)      & \((0,0,0,0)\)      & \((3,3,3,3)\)      \\
\hline
 \(G_0^n(0,1)\)  & \(\ge 4\)      & \(\mathrm{a}.2\)      & \((1,0,0,0)\)      & \((9,3,3,3)\)      \\
\hline
 \(G_0^n(-1,0)\) & \(\ge 4\) even & \(\mathrm{a}.3\)      & \((2,0,0,0)\)      & \((9,9,3,3)\)      \\
 \(G_0^n(1,0)\)  & \(\ge 5\)      & \(\mathrm{a}.3\)      & \((2,0,0,0)\)      & \((9,9,3,3)\)      \\
\hline
 \(G_0^n(1,0)\)  & \(=4\)         & \(\mathrm{a}.3^\ast\) & \((2,0,0,0)\)      & \((27,9,3,3)\)     \\
\hline
 \(G_0^n(0,1)\)  & \(=3\)         & \(\mathrm{A}.1\)      & \((1,1,1,1)\)      & \((9,3,3,3)\)      \\
\hline
\end{tabular}
\end{center}
\end{table}


\begin{proof}
The shallow TKT \(\varkappa_s(G)\) of all \(3\)-groups \(G\) of coclass \(\mathrm{cc}(G)=1\) has been determined in
\cite{Ma2},
where the designations \(\mathrm{a}.n\) of the types were introduced with \(n\in\lbrace 1,2,3\rbrace\).
Here, we indicate a capable mainline vertex of the tree \(\mathcal{T}^1(R)\) with root \(R=C_3\times C_3\)
\cite[Fig. 1--2, pp. 142--143]{Ma11b}
by the type \(\mathrm{a}.1^\ast\) with a trailing asterisk.
As usual, type \(\mathrm{a}.3^\ast\) indicates the unique \(3\)-group \(G\simeq\mathrm{Syl}_3A_9\)
with \(\tau(G)=\lbrack (3,3,3),(3,3)^3\rbrack\).
Now we want to determine the deep TKT \(\varkappa_d(G)\),
using the presentation of \(G\simeq G_a^n(z,w)\) in Formula
\eqref{eqn:Presentation}.
For this purpose, we need expressions for the images of the deep Artin transfers 
\(T_i:=T_{H_i,G^\prime}:\,H_i/H_i^\prime\to G^\prime\),
for each \(1\le i\le 4\). (Observe that \(p=3\) implies \(G^{\prime\prime}=1\) by
\cite{Bl1}.)
Generally, we have to distinguish
\textit{outer} transfers, \(T_i(g\cdot H_i^\prime)=g^3\) if \(g\in H_i\setminus G^\prime\)
\cite[Eqn. (4), p. 470]{Ma2},
and \textit{inner} transfers, \(T_i(g\cdot H_i^\prime)=g^{1+h+h^2}=g^3\cdot\lbrack g,h\rbrack^3\cdot\lbrack\lbrack g,h\rbrack,h\rbrack\)
if \(g\in G^\prime\) and \(h\) is selected in \(H_i\setminus G^\prime\)
\cite[Eqn. (6), p. 486]{Ma2}.

First, we consider the distinguished two-step centralizer \(H_1=\chi_2(G)\) with \(i=1\).
Then \(H_1=\langle y,G^\prime\rangle\) and \(H_1^\prime=1\) if \(a=0\) (\(H_1\) abelian),
but \(H_1^\prime=\gamma_{n-1}(G)=\langle s_{n-1}\rangle\) if \(a=1\) (\(H_1\) non-abelian)
\cite[Eqn. (3), p. 470]{Ma2}.
The outer transfer is determined by \(T_1(y\cdot H_1^\prime)=y^3=s_2^{-3}s_3^{-1}s_{n-1}^z\).
For the inner transfer, we have
\(T_1(s_j\cdot H_1^\prime)=s_j^{1+y+y^2}=s_j^3\cdot\lbrack s_j,y\rbrack^3\cdot\lbrack\lbrack s_j,y\rbrack,y\rbrack=s_j^3\cdot 1^3\cdot\lbrack 1,y\rbrack=s_j^3\)
for all \(j\ge 3\), but \(T_1(s_2\cdot H_1^\prime)=s_2^3\cdot s_{n-1}^{-3a}\cdot\lbrack s_{n-1}^{-a},y\rbrack=s_2^3\) for \(j=2\),
since \(s_{n-1}^{-a}\in\langle s_{n-1}\rangle=\gamma_{n-1}(G)=\zeta_1(G)\) lies in the centre of \(G\).
The first kernel equation \(s_2^{-3}s_3^{-1}s_{n-1}^z=1\) is solvable by
either \(n=3\), where \(z=0\), \(s_3=1\), \(s_2^3=1\),
or \(n=4\), \(z=1\), where \(s_2^3=1\), \(s_{n-1}^z=s_3\).
The second kernel equation \(s_i^3=1\) is solvable by either \(i=n-1\) or \(i=n-2\).
Thus, the deep transfer kernel is given by
\begin{equation}
\label{eqn:dTKT1}
\ker(T_1)=
\begin{cases}
H_1=\langle y,s_2\rangle\simeq C_3\times C_3 \text{ if } n=3\ (G \text{ extra special}), \\
H_1=\langle y,s_2,s_3\rangle\simeq C_3\times C_3\times C_3 \text{ if } n=4,\ z=1\ (G\simeq\mathrm{Syl}_3A_9), \\
\gamma_{n-2}(G)=\langle s_{n-2},s_{n-1}\rangle\simeq C_3\times C_3 \text{ if } n=4,\ z\ne 1 \text{ or } n\ge 5,\ a=0, \\
\gamma_{n-2}(G)/\gamma_{n-1}(G)\simeq\langle s_{n-2}\rangle\simeq C_3 \text{ if } n\ge 5,\ a=1\ (H_1 \text{ non-abelian}).
\end{cases}
\end{equation}

Second, we put \(i=2\).
Then \(H_2=\langle x,G^\prime\rangle\) and \(H_2^\prime=\gamma_3(G)=\langle s_3,\ldots,s_{n-1}\rangle\).
The outer transfer is determined by \(T_2(x\cdot H_2^\prime)=x^3=s_{n-1}^w\).
The inner transfer is given by
\(T_2(s_j\cdot H_2^\prime)=s_j^{1+x+x^2}=s_j^3\cdot\lbrack s_j,x\rbrack^3\cdot\lbrack\lbrack s_j,x\rbrack,x\rbrack=s_j^3s_{j+1}^3s_{j+2}=1\),
for all \(j\ge 2\), independently of \(a,n,w,z\).
Consequently, the deep transfer kernel is given by
\begin{equation}
\label{eqn:dTKT2}
\ker(T_2)=
\begin{cases}
H_2/H_2^\prime=\langle x,s_2,\ldots,s_{n-1}\rangle/\langle s_3,\ldots,s_{n-1}\rangle\simeq\langle x,s_2\rangle\simeq C_3\times C_3\text{ if } w=0, \\
G^\prime/H_2^\prime=\langle s_2,\ldots,s_{n-1}\rangle/\langle s_3,\ldots,s_{n-1}\rangle\simeq\langle s_2\rangle\simeq C_3\text{ if } w=\pm 1.
\end{cases}
\end{equation}

Next, we put \(i=3\).
Then \(H_3=\langle xy,G^\prime\rangle\) and \(H_3^\prime=\gamma_3(G)=\langle s_3,\ldots,s_{n-1}\rangle\).
The outer transfer is determined by \(T_3(xy\cdot H_3^\prime)=(xy)^3=x^3y^3s_2^3s_3s_{n-1}^{-2a}=s_{n-1}^{w+z-2a}\).
For the inner transfer, we have
\(T_3(s_j\cdot H_3^\prime)=s_j^{1+xy+(xy)^2}=s_j^3\cdot\lbrack s_j,xy\rbrack^3\cdot\lbrack\lbrack s_j,xy\rbrack,xy\rbrack=s_j^3s_{j+1}^3s_{j+2}=1\),
for all \(j\ge 3\), independently of \(a,n,w,z\).
The first kernel equation \(s_{n-1}^{w+z-2a}=1\) \(\Longleftrightarrow\) \(w+z-2a\equiv 0\pmod{3}\) is solvable by
either \(a=w=z=0\)
or \(a=1\), \(w=-1\).

Therefore, the deep transfer kernel is given by
\begin{equation}
\label{eqn:dTKT3}
\ker(T_3)=
\begin{cases}
H_3/H_3^\prime\simeq\langle xy,s_2\rangle\simeq C_3\times C_3 \text{ if either } a=w=z=0 \text{ or } a=1,\ w=-1, \\
G^\prime/H_3^\prime\simeq\langle s_2\rangle\simeq C_3 \text{ otherwise }.
\end{cases}
\end{equation}

Finally, we put \(i=4\).
Then \(H_4=\langle xy^2,G^\prime\rangle\) and \(H_4^\prime=\gamma_3(G)=\langle s_3,\ldots,s_{n-1}\rangle\).
The outer transfer is determined by \(T_4(xy^2\cdot H_4^\prime)=(xy^2)^3=x^3y^6s_2^6s_3^2s_{n-1}^{-2a}=s_{n-1}^{w+2z-2a}\).
The inner transfer is given by
\(T_4(s_j\cdot H_4^\prime)=s_j^{1+xy^2+(xy^2)^2}=s_j^3\cdot\lbrack s_j,xy^2\rbrack^3\cdot\lbrack\lbrack s_j,xy^2\rbrack,xy^2\rbrack=s_j^3s_{j+1}^3s_{j+2}=1\),
for all \(j\ge 3\), independently of \(a,n,w,z\).
The first kernel equation \(s_{n-1}^{w+2z-2a}=1\) \(\Longleftrightarrow\) \(w+2z-2a\equiv 0\pmod{3}\) is solvable by
either \(a=w=z=0\)
or \(a=1\), \(w=-1\).

Thus, the deep transfer kernel is given by
\begin{equation}
\label{eqn:dTKT4}
\ker(T_4)=
\begin{cases}
H_4/H_4^\prime\simeq\langle xy^2,s_2\rangle\simeq C_3\times C_3 \text{ if either } a=w=z=0 \text{ or } a=1,\ w=-1, \\
G^\prime/H_4^\prime\simeq\langle s_2\rangle\simeq C_3 \text{ otherwise }.
\end{cases}
\end{equation}

These finer results are summarized in terms of coarser cardinalities in Table
\ref{tbl:Main}.
\end{proof}



\section{Arithmetical application to \(3\)-class tower groups}
\label{s:3ClassTowers}

\subsection{Real quadratic fields}
\label{ss:RealQuadratic}
\noindent
As a final highlight of our progressive innovations,
we come to a number theoretic application of Theorem
\ref{thm:Main},
more precisely, the unambiguous identification of the pro-\(3\) Galois group
\(G_3^\infty{F}=\mathrm{Gal}(F_3^{(\infty)}/F)\)
of the maximal unramified pro-\(3\) extension \(F_3^{(\infty)}\),
that is the Hilbert \(3\)-class field tower,
of certain real quadratic fields \(F=\mathbb{Q}(\sqrt{d})\)
with fundamental discriminant \(d>1\),
\(3\)-class group \(\mathrm{Cl}_3(F)\) of elementary type \((3,3)\),
and shallow transfer kernel type \(\mathrm{a}.1\), \(\varkappa_s(F)=(0,0,0,0)\),
in its \textit{ground state} with \(\tau(F)\sim\lbrack (9,9),(3,3)^3\rbrack\)
or in a higher \textit{excited state} with \(\tau(F)\sim\lbrack (3^e,3^e),(3,3)^3\rbrack\), \(e\ge 3\).

The first field of this kind with \(d=62\,501\) was discovered by Heider and Schmithals in \(1982\)
\cite{HeSm}.
They computed the sTKT \(\varkappa_s(F)=(0,0,0,0)\) with four total \(3\)-principalizations
in the unramified cyclic cubic extensions \(E_i/F\), \(1\le i\le 4\), on a CDC Cyber mainframe.
The fact that \(d=62\,501\) is a \text{triadic irregular} discriminant
(in the sense of Gauss)
with non-cyclic \(3\)-class group \(\mathrm{Cl}_3(F)\simeq C_3\times C_3\)
has been pointed out earlier in \(1936\) by Pall
\cite{Pa}
already.
The second field of this kind with \(d=152\,949\)
was discovered by ourselves in \(1991\) by computing \(\varkappa_s(F)\) on an AMDAHL mainframe
\cite{Ma04}.
In \(2006\), there followed \(d=252\,977\) and \(d=358\,285\), and many other cases in \(2009\)
\cite{Ma1,Ma3}.

Generally, there are three contestants for the group \(G=G_3^\infty{F}\),
for any assigned state \(\tau(F)\sim\lbrack (3^e,3^e),(3,3)^3\rbrack\), \(e\ge 2\),
and the following \textbf{Main Theorem} admits their identification by means of the deep transfer kernel type.
(See their statistical distribution at the end of section
\ref{ss:RealQuadratic}.)


\begin{theorem}
\label{thm:ArithmeticMain}
\textbf{(\(3\)-class tower groups \(G\) of coclass \(\mathrm{cc}(G)=1\) and type \(\mathrm{a}.1\).)}
Let \(F=\mathbb{Q}(\sqrt{d})\) be a real quadratic field
with fundamental discriminant \(d>1\),
\(3\)-class group \(\mathrm{Cl}_3(F)\simeq C_3\times C_3\),
and shallow transfer kernel type \(\mathrm{a}.1\), \(\varkappa_s(F)=(0,0,0,0)\).
Suppose the state of type \(\mathrm{a}.1\) is determined by the transfer target type
\(\tau(F)\sim\lbrack (3^e,3^e),(3,3)^3\rbrack\) with \(e\ge 2\).

Then the relational parameters \(n\ge 5\) and \(w\in\lbrace -1,0,1\rbrace\)
of the \(3\)-class tower group \(G=G_3^\infty{F}\simeq G_1^n(0,w)\) of \(F\)
are given in dependence on the deep transfer kernel type \(\varkappa_d(F)\) as follows:
\begin{equation}
\label{eqn:ArithmeticMain}
\begin{aligned}
& G\simeq G_1^{2(e+1)}(0,0)  & \text{ with } & n=2(e+1),\ w=0  & \Longleftrightarrow & \quad\varkappa_d(F)\sim (3,9,3,3), \\
& G\simeq G_1^{2(e+1)}(0,-1) & \text{ with } & n=2(e+1),\ w=-1 & \Longleftrightarrow & \quad\varkappa_d(F)\sim (3,3,9,9), \\
& G\simeq G_1^{2(e+1)}(0,1)  & \text{ with } & n=2(e+1),\ w=1  & \Longleftrightarrow & \quad\varkappa_d(F)\sim (3,3,3,3).
\end{aligned}
\end{equation}
\end{theorem}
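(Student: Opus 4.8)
The plan is to transport the assertion into pure group theory via the standard Artin-reciprocity dictionary between the subgroups of $G:=G_3^\infty{F}=\mathrm{Gal}(F_3^{(\infty)}/F)$ and the unramified intermediate fields of $F_3^{(\infty)}/F$. Under this dictionary the four maximal subgroups $H_i$ correspond to the four unramified cyclic cubic fields $E_i/F$ with $H_i/H_i^\prime\simeq\mathrm{Cl}_3(E_i)$, the derived subgroup $G^\prime$ corresponds to the Hilbert $3$-class field $F_3^{(1)}$ and $G^{\prime\prime}$ to $F_3^{(2)}$, and each deep transfer $T_{H_i,G^\prime}\colon H_i/H_i^\prime\to G^\prime/G^{\prime\prime}$ becomes the $3$-class extension homomorphism $\mathrm{Cl}_3(E_i)\to\mathrm{Cl}_3(F_3^{(1)})$, whose kernel measures the $3$-capitulation of $E_i$ in $F_3^{(1)}$. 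Consequently $\tau(F)=\tau(G)$ and $\varkappa_d(F)=\varkappa_d(G)$, so the theorem reduces to (i) showing that $G$ is one of the finitely many $3$-groups of Table \ref{tbl:Main} with $\varkappa_s(G)=(0,0,0,0)$, and (ii) separating those few candidates by $\tau$ and $\varkappa_d$.

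For (i) we combine the arithmetic of real quadratic fields with the classification recalled in \S\ref{s:3GroupsDeep}. Since $F$ is real quadratic, the torsion-free rank of its unit group equals $1$ and $\mu_3\not\subseteq F$, so Shafarevich's inequality bounds the relation rank by $d_2(G)\le d_1(G)+1=3$ (note $d_1(G)=2$ because $G/G^\prime\simeq\mathrm{Cl}_3(F)\simeq C_3\times C_3$); in particular the Hilbert $3$-class field tower of $F$ is finite. Together with $\varkappa_s(F)=(0,0,0,0)$ of type $\mathrm{a}.1$, the structure of the descendant tree $\mathcal T^1(C_3\times C_3)$ (Blackburn \cite{Bl1}, Nebelung \cite{Ne}, and \cite{Ma2,Ma11b}) forces $G$ to be metabelian of coclass $\mathrm{cc}(G)=1$; being metabelian, $G$ equals the second $3$-class group $\mathrm{Gal}(F_3^{(2)}/F)$, which realises the Artin pattern $(\tau(F),\varkappa_s(F))$. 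Inspecting the rows of Table \ref{tbl:Main} with $\varkappa_s(G)=(0,0,0,0)$ then shows $G\simeq G_a^n(z,w)$ with $z=0$ and $(a,w)\in\lbrace(0,0),(1,0),(1,-1),(1,1)\rbrace$.

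For (ii) we first pin down $n$. The component $(3^e,3^e)$ of the hypothesised $\tau(F)$ equals $H_i/H_i^\prime$ for some $i$, and $\lvert H_i/H_i^\prime\rvert\le\lvert H_i\rvert=3^{n-1}$, so $e\ge 2$ yields $3^4\le 3^{2e}\le 3^{n-1}$, i.e. $n\ge 5$. By the proof of Theorem \ref{thm:Main}, the two-step centralizer $H_1=\chi_2(G)$ is abelian with $H_1^\prime=1$ when $a=0$ and non-abelian with $H_1^\prime=\gamma_{n-1}(G)\simeq C_3$ when $a=1$; hence, as $(G:H_1)=3$, $\lvert H_1/H_1^\prime\rvert$ equals $3^{n-1}$ or $3^{n-2}$ according as $a=0$ or $a=1$, in either case $\ge 27$, whereas for $j\in\lbrace 2,3,4\rbrace$ we have $H_j^\prime=\gamma_3(G)$ of index $9$ in $H_j$, so $H_j/H_j^\prime\simeq C_3\times C_3$. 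Thus the unique component of $\tau(F)=\tau(G)$ of order exceeding $9$, namely $(3^e,3^e)$, must be $H_1/H_1^\prime$, and comparing orders gives $n=2e+1$ if $a=0$ and $n=2e+2=2(e+1)$ if $a=1$. Finally, the last column of Table \ref{tbl:Main} attaches to the candidates $(a,w)=(0,0)$, $(1,0)$, $(1,-1)$, $(1,1)$ the four pairwise distinct deep transfer kernel types $\varkappa_d\sim(9,9,9,9)$, $(3,9,3,3)$, $(3,3,9,9)$, $(3,3,3,3)$, respectively.

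The three equivalences in \eqref{eqn:ArithmeticMain} now follow at once. For the forward implications one simply reads off Table \ref{tbl:Main} after assuming $G\simeq G_1^{2(e+1)}(0,w)$. For the converse, if $\varkappa_d(F)=\varkappa_d(G)$ is, up to ordering, one of $(3,9,3,3)$, $(3,3,9,9)$, $(3,3,3,3)$---these being distinguished from one another and from $(9,9,9,9)$ already by the number of entries equal to $9$---then $G$ must be the unique candidate of step (i) realising it, that is $G\simeq G_1^n(0,w)$ with the matching $w\in\lbrace-1,0,1\rbrace$; hence $a=1$, and step (ii) gives $n=2(e+1)\ge 6$, so in particular $n\ge 5$. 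The genuine obstacle is step (i): one must rule out that $G$ is a larger $3$-group---of higher coclass, or non-metabelian---with the same shallow Artin pattern $(\tau(F),(0,0,0,0))$. This rests on the finiteness of the $3$-class tower furnished by the Shafarevich bound and on the coclass-tree analysis of type $\mathrm{a}.1$ taken from the cited literature; granting it, step (ii) and the final matching are a routine combination of the commutator calculus in the proof of Theorem \ref{thm:Main} with the subgroup--field dictionary.
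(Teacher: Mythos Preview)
Your approach is essentially the paper's---both reduce to reading off Table~\ref{tbl:Main}---but you supply considerably more scaffolding than the paper, which contents itself with the single sentence ``The claim is an immediate consequence of Theorem~\ref{thm:Main}'' and tacitly relies on the preamble's assertion that ``there are three contestants'' for $G$. Your explicit Artin-reciprocity dictionary, your computation of $n$ from $\tau$ via $\lvert H_1/H_1'\rvert=3^{n-1}$ or $3^{n-2}$ according as $a=0$ or $a=1$, and the final matching of $\varkappa_d$ values are all correct.

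One point deserves correction. You invoke Shafarevich's bound $d_2(G)\le d_1(G)+1=3$ and conclude ``in particular the Hilbert $3$-class field tower of $F$ is finite''. That inference is unjustified: for $d_1=2$ the Golod--Shafarevich threshold for infiniteness is $d_2\le d_1^2/4=1$, so $d_2\le 3$ alone does not force finiteness. The genuine role of $d_2\le 3$ here is different, and it is precisely the ingredient your step~(i) needs to cut the four candidates down to three: the mainline vertices $G_0^n(0,0)$ of type $\mathrm{a}.1^\ast$ have relation rank $d_2=4$ (cf.\ Remark~\ref{rmk:Dihedral}), so the Shafarevich bound excludes them as $3$-class tower groups of real quadratic fields. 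Without this exclusion your argument leaves the possibility $\varkappa_d(F)\sim(9,9,9,9)$ open, and hence does not establish the theorem's implicit assertion that $a=1$, i.e.\ that $G\simeq G_1^n(0,w)$. Once you repurpose the Shafarevich bound in this way (rather than for finiteness), your proof becomes complete and in fact more self-contained than the paper's one-line appeal.
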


\begin{proof}
The claim is an immediate consequence of Theorem
\ref{thm:Main}.
\end{proof}


Table
\ref{tbl:GroundState}
shows that the \textit{ground state} \(\tau(F)=\lbrack (9,9),(3,3)^3\rbrack\)
of the sTKT \(\varkappa_s(F)=(0,0,0,0)\) has the nice property that
the smallest three discriminants already realize three different \(3\)-class tower groups
\(G=G_3^\infty{F}\simeq\langle 729,i\rangle\) with \(i\in\lbrace 99,100,101\rbrace\),
identified by their dTKT \(\varkappa_d(F)=\varkappa_d(G)\).

In Table
\ref{tbl:ExcitedState1},
we see that the \textit{first excited state} \(\tau(F)=\lbrack (27,27),(3,3)^3\rbrack\)
of the sTKT \(\varkappa_s(F)=(0,0,0,0)\) does not behave so well:
although the smallest two discriminants
\cite{Ma1,Ma3,Ma14,Ma14b}
already realize two different \(3\)-class tower groups
\(G=G_3^\infty{F}\simeq\langle 6561,i\rangle\) with \(i\in\lbrace 2225,2227\rbrace\),
we have to wait for the seventh occurrence until \(\langle 6561,2226\rangle\) is realized,
as the dTKT \(\varkappa_d(F)=\varkappa_d(G)\) shows.
The counter \(7\) is a typical example of a \textit{statistic delay}.

In both tables, the shortcut MD means the \textit{minimal discriminant}
\cite[Dfn. 6.2, p. 148]{Ma11b}.


\renewcommand{\arraystretch}{1.0}

\begin{table}[ht]
\caption{Deep TKT of \(3\)-class tower groups \(G\) with \(\tau(G)=\lbrack (9,9),(3,3)^3\rbrack\)}
\label{tbl:GroundState}
\begin{center}
\begin{tabular}{|l|c|r|}
\hline
 \(G\)                                        & \(\varkappa_d(G)\)  & MD           \\
\hline
 \(\langle 729,99\rangle\simeq G_1^6(0,0)\)   & \((3,9,3,3)\)       &  \(62\,501\) \\
 \(\langle 729,100\rangle\simeq G_1^6(0,-1)\) & \((3,3,9,9)\)       & \(152\,949\) \\
 \(\langle 729,101\rangle\simeq G_1^6(0,1)\)  & \((3,3,3,3)\)       & \(252\,977\) \\
\hline
\end{tabular}
\end{center}
\end{table}


\renewcommand{\arraystretch}{1.1}

\begin{table}[ht]
\caption{Deep TKT of \(3\)-class tower groups \(G\) with \(\tau(G)=\lbrack (27,27),(3,3)^3\rbrack\)}
\label{tbl:ExcitedState1}
\begin{center}
\begin{tabular}{|l|c|r|l|}
\hline
 \(G\)                                          & \(\varkappa_d(G)\)  & MD               & further discriminants \\
\hline
 \(\langle 6561,2225\rangle\simeq G_1^8(0,0)\)  & \((3,9,3,3)\)       & \(10\,399\,596\) & \(16\,613\,448\) \\
 \(\langle 6561,2226\rangle\simeq G_1^8(0,-1)\) & \((3,3,9,9)\)       & \(27\,780\,297\) & \\
 \(\langle 6561,2227\rangle\simeq G_1^8(0,1)\)  & \((3,3,3,3)\)       &  \(2\,905\,160\) & \(14\,369\,932\), \(15\,019\,617\), \(21\,050\,241\) \\
\hline
\end{tabular}
\end{center}
\end{table}




\begin{figure}[ht]
\caption{Distribution of minimal discriminants for \(G_3^\infty{F}\) on the coclass tree \(\mathcal{T}^1(\langle 9,2\rangle)\)}
\label{fig:MinDscTreeCc1}

\input{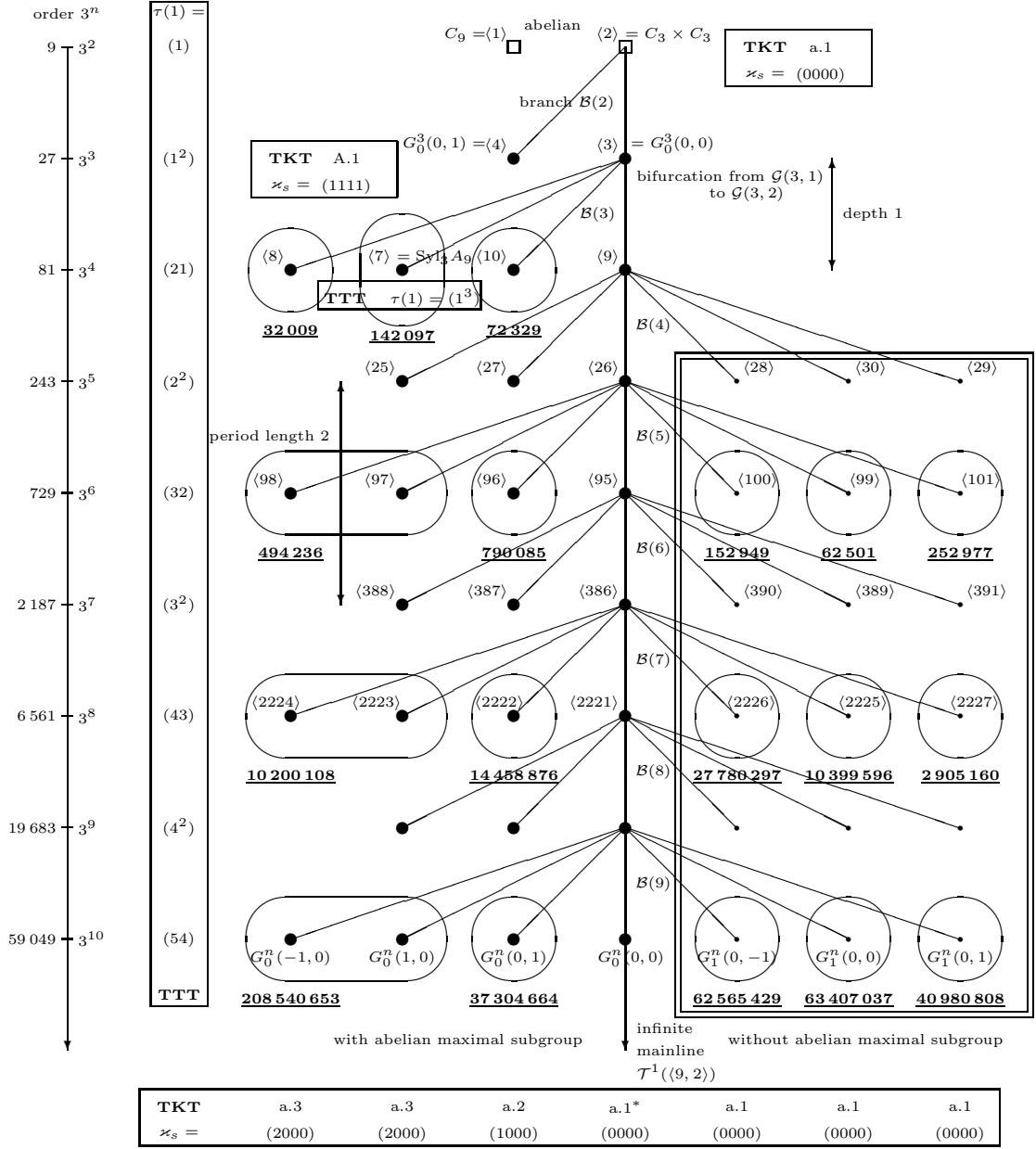}

\end{figure}


The diagram in Figure
\ref{fig:MinDscTreeCc1}
visualizes the initial eight branches of the coclass tree \(\mathcal{T}^1(R)\)
with abelian root \(R=\langle 9,2\rangle\simeq C_3\times C_3\).
Basic definitions, facts, and notation concerning general descendant trees of finite \(p\)-groups
are summarized briefly in
\cite[\S\ 2, pp. 410--411]{Ma4},
\cite{Ma4a}.
They are discussed thoroughly in the broadest detail in the initial sections of
\cite{Ma6}.
Descendant trees are crucial for recent progress in the theory of \(p\)-class field towers
\cite{Ma13,Ma15,Ma15b},
in particular for describing the mutual location of
the second \(p\)-class group \(\mathrm{G}_p^2{F}\) and the \(p\)-class tower group \(\mathrm{G}_p^\infty{F}\)
of a number field \(F\).
Generally, the vertices of the coclass tree in the figure
represent isomorphism classes of finite \(3\)-groups.
Two vertices are connected by a directed edge \(G\to H\) if
\(H\) is isomorphic to the last lower central quotient \(G/\gamma_c(G)\),
where \(c=\mathrm{cl}(G)=n-1\) denotes the nilpotency class of \(G\),
and \(\lvert G\rvert=3\lvert H\rvert\), that is,
\(\gamma_c(G)\simeq C_3\) is cyclic of order \(3\).
See also
\cite[\S\ 2.2, p. 410--411]{Ma4}
and
\cite[\S\ 4, p. 163--164]{Ma6}.

The vertices of the tree diagram in Figure
\ref{fig:MinDscTreeCc1}
are classified by using various symbols:
\begin{enumerate}
\item
big contour squares \(\square\) represent abelian groups,
\item
big full discs {\Large \(\bullet\)} represent metabelian groups
with at least one abelian maximal subgroup,
\item
small full discs {\footnotesize \(\bullet\)} represent metabelian groups
without abelian maximal subgroups.
\end{enumerate}
The groups of particular importance are labelled by a number in angles,
which is the identifier in the SmallGroups Library
\cite{BEO}
of MAGMA
\cite{MAGMA}.
We omit the orders, which are given on the left hand scale.
The sTKT \(\varkappa_s\)
\cite[Thm. 2.5, Tbl. 6--7]{Ma2},
in the bottom rectangle concerns all
vertices located vertically above.
The first component \(\tau(1)\) of the TTT
\cite[Dfn. 3.3, p. 288]{Ma7}
in the left rectangle
concerns vertices \(G\) on the same horizontal level containing an abelian maximal subgroup.
It is given in logarithmic notation.
The periodicity with length \(2\) of branches,
\(\mathcal{B}(j)\simeq\mathcal{B}(j+2)\) for \(j\ge 4\),
sets in with branch \(\mathcal{B}(4)\),
having a root of order \(3^4\).

\(3\)-class tower groups \(G=G_3^\infty{F}\) with coclass \(\mathrm{cc}(G)=1\)
of real quadratic fields \(F=\mathbb{Q}(\sqrt{d})\)
are located as arithmetically realized vertices on the tree diagram in Figure
\ref{fig:MinDscTreeCc1}.
The minimal fundamental discriminants \(d\), i.e. the MDs,
are indicated by underlined boldface integers
adjacent to the oval surrounding the realized vertex
\cite{BEO,MAGMA,Ma9}.

The double contour rectangle surrounds the vertices
which became distinguishable by the progressive innovations in the present paper
and were inseparable up to now.


\renewcommand{\arraystretch}{1.1}

\begin{table}[ht]
\caption{Statistics of \(3\)-class tower groups \(G\) with \(\tau(G)=\lbrack (9,9),(3,3)^3\rbrack\)}
\label{tbl:Statistics}
\begin{center}
\begin{tabular}{|r|r|l||r|r|l|}
\hline
    No. &           \(d\) & \(G\)                      &    No. &           \(d\) & \(G\)                      \\
\hline
  \(1\) &     \(62\,501\) & \(\langle 729,99\rangle\)  & \(35\) & \(2\,710\,072\) & \(\langle 729,100\rangle\) \\
  \(2\) &    \(152\,949\) & \(\langle 729,100\rangle\) & \(36\) & \(2\,851\,877\) & \(\langle 729,99\rangle\)  \\
  \(3\) &    \(252\,977\) & \(\langle 729,101\rangle\) & \(37\) & \(2\,954\,929\) & \(\langle 729,99\rangle\)  \\
  \(4\) &    \(358\,285\) & \(\langle 729,101\rangle\) & \(38\) & \(3\,005\,369\) & \(\langle 729,101\rangle\) \\
  \(5\) &    \(531\,437\) & \(\langle 729,99\rangle\)  & \(39\) & \(3\,197\,864\) & \(\langle 729,100\rangle\) \\
  \(6\) &    \(586\,760\) & \(\langle 729,101\rangle\) & \(40\) & \(3\,197\,944\) & \(\langle 729,101\rangle\) \\
  \(7\) &    \(595\,009\) & \(\langle 729,100\rangle\) & \(41\) & \(3\,258\,120\) & \(\langle 729,101\rangle\) \\
  \(8\) &    \(726\,933\) & \(\langle 729,99\rangle\)  & \(42\) & \(3\,323\,065\) & \(\langle 729,99\rangle\)  \\
  \(9\) &    \(801\,368\) & \(\langle 729,100\rangle\) & \(43\) & \(3\,342\,785\) & \(\langle 729,99\rangle\)  \\
 \(10\) &    \(940\,593\) & \(\langle 729,100\rangle\) & \(44\) & \(3\,644\,357\) & \(\langle 729,99\rangle\)  \\
 \(11\) &    \(966\,489\) & \(\langle 729,99\rangle\)  & \(45\) & \(3\,658\,421\) & \(\langle 729,100\rangle\) \\
 \(12\) & \(1\,177\,036\) & \(\langle 729,99\rangle\)  & \(46\) & \(3\,692\,717\) & \(\langle 729,99\rangle\)  \\
 \(13\) & \(1\,192\,780\) & \(\langle 729,101\rangle\) & \(47\) & \(3\,721\,565\) & \(\langle 729,99\rangle\)  \\
 \(14\) & \(1\,313\,292\) & \(\langle 729,99\rangle\)  & \(48\) & \(3\,799\,597\) & \(\langle 729,100\rangle\) \\
 \(15\) & \(1\,315\,640\) & \(\langle 729,99\rangle\)  & \(49\) & \(3\,821\,244\) & \(\langle 729,99\rangle\)  \\
 \(16\) & \(1\,358\,556\) & \(\langle 729,100\rangle\) & \(50\) & \(3\,869\,909\) & \(\langle 729,99\rangle\)  \\
 \(17\) & \(1\,398\,829\) & \(\langle 729,101\rangle\) & \(51\) & \(3\,995\,004\) & \(\langle 729,101\rangle\) \\
 \(18\) & \(1\,463\,729\) & \(\langle 729,101\rangle\) & \(52\) & \(4\,045\,265\) & \(\langle 729,101\rangle\) \\
 \(19\) & \(1\,580\,709\) & \(\langle 729,100\rangle\) & \(53\) & \(4\,183\,205\) & \(\langle 729,100\rangle\) \\
 \(20\) & \(1\,595\,669\) & \(\langle 729,100\rangle\) & \(54\) & \(4\,196\,840\) & \(\langle 729,100\rangle\) \\
 \(21\) & \(1\,722\,344\) & \(\langle 729,99\rangle\)  & \(55\) & \(4\,199\,901\) & \(\langle 729,101\rangle\) \\
 \(22\) & \(1\,751\,909\) & \(\langle 729,101\rangle\) & \(56\) & \(4\,220\,977\) & \(\langle 729,100\rangle\) \\
 \(23\) & \(1\,831\,097\) & \(\langle 729,99\rangle\)  & \(57\) & \(4\,233\,608\) & \(\langle 729,99\rangle\)  \\
 \(24\) & \(1\,942\,385\) & \(\langle 729,101\rangle\) & \(58\) & \(4\,252\,837\) & \(\langle 729,100\rangle\) \\
 \(25\) & \(2\,021\,608\) & \(\langle 729,99\rangle\)  & \(59\) & \(4\,409\,313\) & \(\langle 729,100\rangle\) \\
 \(26\) & \(2\,042\,149\) & \(\langle 729,101\rangle\) & \(60\) & \(4\,429\,612\) & \(\langle 729,101\rangle\) \\
 \(27\) & \(2\,076\,485\) & \(\langle 729,99\rangle\)  & \(61\) & \(4\,533\,032\) & \(\langle 729,99\rangle\)  \\
 \(28\) & \(2\,185\,465\) & \(\langle 729,101\rangle\) & \(62\) & \(4\,586\,797\) & \(\langle 729,100\rangle\) \\
 \(29\) & \(2\,197\,669\) & \(\langle 729,101\rangle\) & \(63\) & \(4\,662\,917\) & \(\langle 729,100\rangle\) \\
 \(30\) & \(2\,314\,789\) & \(\langle 729,99\rangle\)  & \(64\) & \(4\,680\,701\) & \(\langle 729,99\rangle\)  \\
 \(31\) & \(2\,409\,853\) & \(\langle 729,99\rangle\)  & \(65\) & \(4\,766\,309\) & \(\langle 729,99\rangle\)  \\
 \(32\) & \(2\,433\,221\) & \(\langle 729,101\rangle\) & \(66\) & \(4\,782\,664\) & \(\langle 729,99\rangle\)  \\
 \(33\) & \(2\,539\,129\) & \(\langle 729,101\rangle\) & \(67\) & \(4\,783\,697\) & \(\langle 729,100\rangle\) \\
 \(34\) & \(2\,555\,249\) & \(\langle 729,100\rangle\) & \(68\) & \(4\,965\,009\) & \(\langle 729,?\rangle\) \\
\hline
\end{tabular}
\end{center}
\end{table}


In Table
\ref{tbl:Statistics},
we give the isomorphism type of the \(3\)-class tower group \(G=G_3^\infty{F}\)
of all real quadratic fields \(F=\mathbb{Q}(\sqrt{d})\)
with \(3\)-class group \(\mathrm{Cl}_3(F)\simeq C_3\times C_3\)
and shallow transfer kernel type \(\mathrm{a}.1\), \(\varkappa_s=(0,0,0,0)\),
in its \textit{ground state} \(\tau(F)=\lbrack (9,9),(3,3)^3\rbrack\),
for the complete range \(1<d<5\cdot 10^6\) of \(68\) fundamental discriminants \(d\).
It was determined by means of Theorem
\ref{thm:ArithmeticMain},
applied to the results of computing the (restricted) deep transfer kernel type
\(\varkappa_d(F)=(\#\ker(T_{F_3^{(1)}/E_i}))_{2\le i\le 4}\),
consisting of the orders of the \(3\)-principalization kernels
of those unramified cyclic cubic extensions \(E_i\), \(2\le i\le 4\),
in the Hilbert \(3\)-class field \(F_3^{(1)}\) of \(F\)
whose \(3\)-class group \(\mathrm{Cl}_3(E_i)\) is of type \((3,3)\).
These trailing three components of the TTT \(\tau(F)=\lbrack (9,9),(3,3)^3\rbrack\)
were called its \textit{stable part} in
\cite[Dfn. 5.5, p. 84]{Ma9}.
The computations were done with the aid of the computational algebra system MAGMA
\cite{MAGMA}.
The \(3\)-principalization kernel of the
remaining extension \(E_1\) with \(3\)-class group \(\mathrm{Cl}_3(E_1)\) of type \((9,9)\)
does not contain essential information and can be omitted.
This leading component of the TTT \(\tau(F)=\lbrack (9,9),(3,3)^3\rbrack\)
was called its \textit{polarized part} in
\cite[Dfn. 5.5, p. 84]{Ma9}.
For more details on the concepts \textit{stabilization} and \textit{polarization}, see
\cite[\S\ 6, pp. 90--95]{Ma9}.
A systematic statistical evaluation of Table 
\ref{tbl:Statistics}
shows that, with respect to the complete range \(1<d<5\cdot 10^6\),
the group \(G\simeq\langle 729,99\rangle\) occurs with slightly elevated relative frequency \(40\%\),
whereas \(G\simeq\langle 729,100\rangle\) and \(G\simeq\langle 729,101\rangle\) share the common percentage \(30\%\),
although the automorphism group \(\mathrm{Aut}(G)\) of all three groups has the same order.
However, the proportion \(40:30:30\) for the bound \(5\cdot 10^6\) is obviously not settled yet,
because there are remarkable fluctuations
\(36:36:27\) for \(10^6\),
\(38:29:33\) for \(2\cdot 10^6\),
\(41:24:35\) for \(3\cdot 10^6\), and
\(43:24:33\) for \(4\cdot 10^6\).
We expect an asymptotic limit \(33:33:33\) of the proportion for \(d\to\infty\).


\subsection{Totally real dihedral fields}
\label{ss:TotallyRealDihedral}
\noindent
In fact, we have computed much more information with MAGMA
than mentioned at the end of the previous section
\ref{ss:RealQuadratic}.
To understand the actual scope of our numerical results
it is necessary to recall that each unramified cyclic cubic relative extension \(E_i/F\), \(1\le i\le 4\),
gives rise to a dihedral absolute extension \(E_i/\mathbb{Q}\) of degree \(6\),
that is an \(S_3\)-extension
\cite[Prp. 4.1, p. 482]{Ma1}.
For the trailing three fields \(E_i\), \(2\le i\le 4\),
in the stable part of the TTT \(\tau(F)=\lbrack (9,9),(3,3)^3\rbrack\),
i.e. with \(\mathrm{Cl}_3(E_i)\) of type \((3,3)\),
we have constructed the unramified cyclic cubic extensions \(\tilde{E}_{i,j}/E_i\), \(1\le j\le 4\),
and determined the Artin pattern \(\mathrm{AP}(E_i)\) of \(E_i\),
in particular, the \(3\)-principalization type of \(E_i\) in the fields \(\tilde{E}_{i,j}\).
The dihedral fields \(E_i\) of degree \(6\) share a common polarization
\(\tilde{E}_{i,1}=F_3^{(1)}\), the Hilbert \(3\)-class field of \(F\),
which is contained in the relative \(3\)-genus field \((E_i/F)^\ast\),
whereas the other extensions \(\tilde{E}_{i,j}\) with \(2\le j\le 4\)
are non-abelian over \(F\), for each \(2\le i\le 4\).
Our computational results suggest the following conjecture
concerning the infinite family of totally real dihedral fields
\(E_i\) for varying real quadratic fields \(F\).

\begin{conjecture}
\label{cnj:Dihedral}
\textbf{(\(3\)-class tower groups \(\mathcal{G}\) of totally real dihedral fields.)}
Let \(F=\mathbb{Q}(\sqrt{d})\) be a real quadratic field
with fundamental discriminant \(d>1\),
\(3\)-class group \(\mathrm{Cl}_3(F)\simeq C_3\times C_3\),
and shallow transfer kernel type \(\mathrm{a}.1\), \(\varkappa_s(F)=(0,0,0,0)\),
in the ground state with transfer target type \(\tau(F)\sim\lbrack (9,9),(3,3)^3\rbrack\).
Let \(E_2,E_3,E_4\) be the three unramified cyclic cubic relative extensions of \(F\)
with \(3\)-class group \(\mathrm{Cl}_3(E_i)\) of type \((3,3)\).

Then \(E_i/\mathbb{Q}\) is a totally real dihedral extension of degree \(6\),
for each \(2\le i\le 4\),
and the connection between the component \(\varkappa_d(F)_i=\#\ker(T_{F_3^{(1)}/E_i})\)
of the deep transfer kernel type \(\varkappa_d(F)\) of \(F\)
and the \(3\)-class tower group \(\mathcal{G}_i=G_3^\infty{E_i}=\mathrm{Gal}((E_i)_3^{(\infty)}/E_i)\) of \(E_i\)
is given in the following way:
\begin{equation}
\label{eqn:Dihedral}
\begin{aligned}
 & \varkappa_d(F)_i=3 & \Longleftrightarrow & \quad\mathcal{G}_i\simeq\langle 243,27\rangle & \text{ with } \varkappa_s(\mathcal{G}_i)=(1,0,0,0), \\
 & \varkappa_d(F)_i=9 & \Longleftrightarrow & \quad\mathcal{G}_i\simeq\langle 243,26\rangle & \text{ with } \varkappa_s(\mathcal{G}_i)=(0,0,0,0).
\end{aligned}
\end{equation}
\end{conjecture}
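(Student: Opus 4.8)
The plan is to convert the arithmetic statement into a question about the maximal subgroups of the three candidate groups of Theorem~\ref{thm:ArithmeticMain}, and then to apply Theorem~\ref{thm:Main} once more to those subgroups. Since \(E_i/F\) is unramified, \(F_3^{(\infty)}\) is an unramified pro-\(3\) extension of \(E_i\), while conversely every unramified pro-\(3\) extension of \(E_i\) is unramified over \(F\); hence \(F_3^{(\infty)}=(E_i)_3^{(\infty)}\), and the \(3\)-class tower group of \(E_i\) is
\(\mathcal{G}_i=\mathrm{Gal}(F_3^{(\infty)}/E_i)=H_i\),
the maximal subgroup of \(G=\mathrm{Gal}(F_3^{(\infty)}/F)\) attached to the unramified cyclic cubic extension \(E_i\subseteq F_3^{(1)}\). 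In the ordering \eqref{eqn:MaximalSubgroups}, the three \(E_i\) with \(\mathrm{Cl}_3(E_i)\simeq C_3\times C_3\) correspond to \(H_2,H_3,H_4\) (while the polarized one \(E_1\) with \(\mathrm{Cl}_3(E_1)\simeq C_9\times C_9\) corresponds to \(H_1=\chi_2(G)\)), so \(H_i/H_i^\prime\simeq C_3\times C_3\) for \(2\le i\le 4\). By the functoriality of Artin reciprocity, the capitulation homomorphism \(T_{F_3^{(1)}/E_i}\colon\mathrm{Cl}_3(E_i)\to\mathrm{Cl}_3(F_3^{(1)})\) coincides with the group transfer \(T_{H_i,G^\prime}\colon H_i/H_i^\prime\to G^\prime/G^{\prime\prime}=G^\prime\), whence \(\varkappa_d(F)_i=\#\ker(T_{H_i,G^\prime})=\varkappa_d(G)_i\); moreover \(E_i/\mathbb{Q}\) is a totally real dihedral sextic extension by \cite{Ma1}. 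Finally, Theorem~\ref{thm:ArithmeticMain} gives \(G\simeq G_1^6(0,w)\) for some \(w\in\lbrace-1,0,1\rbrace\).

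It remains to identify \(H_i\) for \(i\in\lbrace 2,3,4\rbrace\) by a commutator computation with the presentation \eqref{eqn:Presentation} (taking \(a=1\), \(n=6\), \(z=0\)), along the lines of Lemma~\ref{lem:Powers} and the proof of Theorem~\ref{thm:Main}. Writing \(H_i=\langle g_i,G^\prime\rangle\) with \(g_i\in\lbrace x,xy,xy^2\rbrace\), one checks that \(H_i^\prime=\gamma_3(G)\) has order \(3^3\) and that the lower central series \(\gamma_2(H_i)\supsetneq\gamma_3(H_i)\supsetneq\gamma_4(H_i)\supsetneq 1\) has length \(4\), so \(\mathrm{cc}(H_i)=1\). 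Choosing \(g_i\) as the distinguished generator ``\(x\)'' of the sub-presentation and \(\lbrack s_2,g_i\rbrack\) (modulo \(\gamma_3(H_i)\)) as its ``\(s_2\)'', the defining relations of \(H_i\) should reduce to those of \(G_0^5(0,w_i)\) with \(w_2\equiv w\pmod 3\) and \(w_3\equiv w_4\equiv w-2a\pmod 3\) --- exactly the exponents \(w\), \(w+z-2a\), \(w+2z-2a\) that govern the outer deep transfers \(T_2,T_3,T_4\) in the proof of Theorem~\ref{thm:Main} --- while along the way the relational parameter \(z_i\) of \(H_i\) turns out to vanish and the two-step centralizer of \(H_i^\prime\) in \(H_i\) to be abelian. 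Identifying \(G_0^5(0,0)\simeq\langle 243,26\rangle\) (type \(\mathrm{a}.1^\ast\), \(\varkappa_s=(0,0,0,0)\)) and \(G_0^5(0,\pm 1)\simeq\langle 243,27\rangle\) (type \(\mathrm{a}.2\), \(\varkappa_s=(1,0,0,0)\)) from Table~\ref{tbl:Main} and the SmallGroups library, and comparing the residues \(w_i\bmod 3\) with the deep transfer entries \(\varkappa_d(G)_i\in\lbrace 3,9\rbrace\) of Table~\ref{tbl:Main} (equivalently, with the solvability conditions of \eqref{eqn:dTKT2}--\eqref{eqn:dTKT4}), one obtains in each of the nine cases the equivalences
\(\varkappa_d(F)_i=9\Leftrightarrow w_i\equiv 0\Leftrightarrow H_i\simeq\langle 243,26\rangle\) and
\(\varkappa_d(F)_i=3\Leftrightarrow w_i\not\equiv 0\Leftrightarrow H_i\simeq\langle 243,27\rangle\).
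Since \(\mathcal{G}_i=H_i\), this is exactly \eqref{eqn:Dihedral}, and the stated shallow transfer kernel types \(\varkappa_s(\mathcal{G}_i)\) then follow from Table~\ref{tbl:Main}.

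The main obstacle is the second step. For the off-diagonal generators \(g_3=xy\) and \(g_4=xy^2\) one must extend the commutator identities of Lemma~\ref{lem:Powers} far enough to control the full lower central series of \(H_i\), to confirm that \(z_i\equiv 0\), and hence to exclude a priori the remaining coclass-\(1\) group \(\langle 243,25\rangle\simeq G_0^5(1,0)\) of type \(\mathrm{a}.3\); the polarization and monotony principles for Artin patterns, together with the MAGMA computations of \S\ \ref{ss:RealQuadratic}, furnish an independent check. It is the absence of a conceptual shortcut past this case-by-case presentation calculus that, for the present, keeps the statement at the level of a conjecture rather than a theorem.
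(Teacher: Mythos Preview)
The paper does \emph{not} prove this statement: it is explicitly a conjecture, and Remark~\ref{rmk:Dihedral} offers only computational support (the \(204\) dihedral fields underlying Table~\ref{tbl:Statistics}) together with the partial observation \(\varkappa_d(F)_i=\#\varkappa_s(\mathcal{G}_i)_1\), which fixes the first sTKT component of \(\mathcal{G}_i\) but, as the author writes, ``does not explain why the sTKT \(\varkappa_s(\mathcal{G}_i)\) is \(\mathrm{a}.2\) with a fixed point if \(\varkappa_d(F)_i=3\)''.

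Your approach is genuinely different and in fact stronger: it is a blueprint for upgrading the conjecture to a theorem. The decisive step the paper does not take is your identification \((E_i)_3^{(\infty)}=F_3^{(\infty)}\) (valid because \(E_i/F\) is Galois, unramified, and of \(3\)-power degree), which gives \(\mathcal{G}_i=H_i\) as an honest equality of groups. Once \(G\simeq G_1^6(0,w)\) is fixed by Theorem~\ref{thm:ArithmeticMain}, everything reduces to a finite computation inside three explicit groups of order \(729\). Your calculus for \(H_2=\langle x,G^\prime\rangle\) (with \(X:=x\), \(Y:=s_2\), \(S_j=s_{j+1}\)) goes through verbatim and yields \(H_2\simeq G_0^5(0,w)\); for \(H_3\) and \(H_4\) the same choice \(Y=s_2\) also works once one uses \(s_3^3=s_5^{-1}\) (from \(s_3^3s_4^3s_5=1\) and \(s_4^3=1\) in \(G_1^6\)), giving \(H_3\simeq H_4\simeq G_0^5(0,w-2)\) with \(a_i=z_i=0\) exactly as you predicted. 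Combined with \(G_0^5(0,-1)\simeq G_0^5(0,1)\simeq\langle 243,27\rangle\) (via \(y\mapsto y^2\)) and \(G_0^5(0,0)\simeq\langle 243,26\rangle\), this settles all nine cases and yields \eqref{eqn:Dihedral}. Your closing disclaimer therefore undersells your own argument: there is no conceptual obstacle, the ``case-by-case presentation calculus'' can be completed in a few lines (or by asking MAGMA for the identifiers of the index-\(3\) subgroups of \(\langle 729,99\rangle\), \(\langle 729,100\rangle\), \(\langle 729,101\rangle\)), and what the paper records as a conjecture is, by your route, a theorem.
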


\begin{remark}
\label{rmk:Dihedral}
The conjecture is supported by all \(3\cdot 68=204\) totally real dihedral fields \(E_i\)
which were involved in the computation of Table
\ref{tbl:Statistics}.
A provable argument for the truth of the conjecture is the fact that
\(\varkappa_d(F)_i=\#\ker(T_{F_3^{(1)}/E_i})=\#\varkappa_s(E_i)_1=\#\varkappa_s(\mathcal{G}_i)_1\),
for \(2\le i\le 4\),
but it does not explain why the sTKT \(\varkappa_s(\mathcal{G}_i)\) is \(\mathrm{a}.2\) with a fixed point if \(\varkappa_d(F)_i=3\).
It is interesting that a dihedral field \(E_i\) of degree \(6\) is satisfied with a non-\(\sigma\) group,
such as \(\langle 243,27\rangle\), as its \(3\)-class tower group.
On the other hand, it is not surprising that a mainline group,
such as \(\langle 243,26\rangle\) with sTKT \(\mathrm{a}.1^\ast\) and relation rank \(d_2=4\), is possible as \(\mathcal{G}_i=G_3^\infty{E_i}\),
since the upper Shafarevich bound for the relation rank of the \(3\)-class tower group
of a totally real dihedral field \(E_i\) of degree \(6\) with \(\mathrm{Cl}_3(E_i)\simeq C_3\times C_3\)
is given by \(\varrho+r_1+r_2-1=2+6+0-1=7>4\)
\cite[Thm. 1.3, p. 75]{Ma15}.
\end{remark}

\noindent
Assuming an asymptotic limit \(33:33:33\) of the proportion of the real quadratic \(3\)-class tower groups
\(G\in\lbrace\langle 729,99\rangle,\langle 729,100\rangle,\langle 729,101\rangle\rbrace\)
for the ground state of sTKT \(\mathrm{a}.1\),
we can also conjecture an asymptotic limit \(33:66\) of the corresponding totally real dihedral \(3\)-class tower groups
\(\mathcal{G}_i\in\lbrace\langle 243,26\rangle,\lbrace\langle 243,27\rangle\rbrace\),
since the restricted dTKTs \((9,3,3)\), \((3,9,9)\), \((3,3,3)\) together contain
three times the \(9\) and six times the \(3\) in Equation
\eqref{eqn:Dihedral}.



\section{Acknowledgements}
\label{s:Acknowledgements}

\noindent
The author gratefully acknowledges that his research was supported
by the Austrian Science Fund (FWF): P 26008-N25.




\end{document}